\newtheorem{theorem}{Theorem}
\newtheorem{corollary}[theorem]{Corollary}
\newtheorem{definition}[theorem]{Definition}
\newtheorem{lemma}[theorem]{Lemma}
\newtheorem{proposition}[theorem]{Proposition}
\newtheorem{assumption}[theorem]{Assumption}
\newcommand{\Real}{\mathbb{R}}
\newcommand{\Ex}{\mathbb{E}}
\newcommand{\er}{\hat{\xi}}
\newcommand{\Er}{\tilde{\xi}}
\newcommand{\base}{\mathbf{e}}
\newcommand\numberthis{\addtocounter{equation}{1}\tag{\theequation}}
\newcommand{\tr}{\ensuremath{{\scriptscriptstyle\mathsf{T}}}}
\title{Online Learning of Dynamic Parameters\\ in Social Networks}
\author{
Shahin Shahrampour $^{1}$  \ \ \ \ \ Alexander Rakhlin $^{2}$ \ \ \ \ \  Ali Jadbabaie $^{1}$\\
$^{1}$Department of Electrical and Systems Engineering, $^{2}$Department of Statistics\\
University of Pennsylvania\\
Philadelphia, PA 19104 USA\\
\texttt{$^{1}$\{shahin,jadbabai\}@seas.upenn.edu} \ \ \ \texttt{$^{2}$rakhlin@wharton.upenn.edu} \\
\And
}
\begin{document}

\maketitle

\begin{abstract}
This paper addresses the problem of online learning in a dynamic setting. We consider a social network in which each individual observes a private signal about the underlying state of the world and communicates with her neighbors at each time period. Unlike many existing approaches, the underlying state is dynamic, and evolves according to a geometric random walk. We view the scenario as an optimization problem where agents aim to learn the true state while suffering the smallest possible loss. Based on the decomposition of the global loss function, we introduce two update mechanisms, each of which generates an estimate of the true state. We establish a tight bound on the rate of change of the underlying state, under which individuals can track the parameter with a bounded variance. Then, we characterize explicit expressions for the steady state mean-square deviation(MSD) of the estimates from the truth, per individual. We observe that only one of the estimators recovers the optimal MSD, which underscores the impact of the objective function decomposition on the learning quality. Finally, we provide an upper bound on the regret of the proposed methods, measured as an average of errors in estimating the parameter in a finite time. 
\end{abstract}

\section{Introduction}
In recent years, distributed estimation, learning and prediction has attracted a considerable attention in wide variety of disciplines with applications ranging from sensor networks to social and economic networks\cite{degroot1974reaching,jadbabaie2012non,mossel2010efficient,dekel2012optimal,xiao2005scheme,kar2012distributed}. In this broad class of problems, agents aim to learn the true value of a parameter often called the {\it underlying state of the world}. The state could represent a product, an opinion, a vote, or a quantity of interest in a sensor network. Each agent observes a {\it private} signal about the underlying state at each time period, and communicates with her neighbors to augment her imperfect observations. Despite the wealth of research in this area when the underlying state is fixed (see e.g.\cite{degroot1974reaching,jadbabaie2012non,mossel2010efficient,shahrampour2013exponentially}), often the state is subject to some change over time(e.g. the price of stocks)\cite{acemoglu2008convergence,frongillo2011social,khan2010connectivity,olfati2007distributed}. Therefore, it is more realistic to study models which allow the parameter of interest to vary. In the non-distributed context, such models have been studied in the classical literature on time-series prediction, and, more recently, in the literature on online learning under relaxed assumptions about the nature of sequences \cite{cesa2006prediction}. In this paper we aim to study the sequential prediction problem in the context of a social network and noisy feedback to agents.

We consider a stochastic optimization framework to describe an online social learning problem when the underlying state of the world varies over time. Our motivation for the current study is the results of\cite{acemoglu2008convergence} and\cite{frongillo2011social} where authors propose a social learning scheme in which the underlying state follows a simple random walk. However, unlike\cite{acemoglu2008convergence} and\cite{frongillo2011social}, we assume a {\it geometric} random walk evolution with an associated {\it rate of change}. This enables us to investigate the interplay of social learning, network structure, and the rate of state change, especially in the interesting case that the rate is greater than unity. We then pose the social learning as an optimization problem in which individuals aim to suffer the smallest possible loss as they observe the stream of signals. Of particular relevance to this work is the work of Duchi {\it et al.} in\cite{duchi2012dual} where the authors develop a distributed method based on dual averaging of sub-gradients to converge to the optimal solution. In this paper, we restrict our attention to quadratic loss functions regularized by a quadratic proximal function, but there is no fixed optimal solution as the underlying state is dynamic. In this direction, the key observation is the decomposition of the global loss function into local loss functions. We consider two decompositions for the global objective, each of which gives rise to a {\it single-consensus-step} belief update mechanism. The first method incorporates the averaged prior beliefs among neighbors with the new private observation, while the second one takes into account the observations in the neighborhood as well. In both scenarios, we establish that the estimates are eventually unbiased, and we characterize an explicit expression for the mean-square deviation(MSD) of the beliefs from the truth, per individual. Interestingly, this quantity relies on the whole spectrum of the communication matrix which exhibits the formidable role of the network structure in the asymptotic learning. We observe that the estimators outperform the upper bound provided for MSD in the previous work\cite{acemoglu2008convergence}. Furthermore, only one of the two proposed estimators can compete with the centralized optimal Kalman Filter\cite{kalman1960new} in certain circumstances. This fact underscores the dependence of optimality on decomposition of the global loss function. We further highlight the influence of connectivity on learning by quantifying the ratio of MSD for a complete versus a disconnected network. We see that this ratio is always less than unity and it can get arbitrarily close to zero under some constraints.

Our next contribution is to provide an upper bound for \emph{regret} of the proposed methods, defined as an average of errors in estimating the parameter up to a given time minus the long-run expected loss due to noise and dynamics alone. This finite-time regret analysis is based on the recently developed concentration inequalities for matrices and it complements the asymptotic statements about the behavior of MSD.

Finally, we examine the trade-off between the network sparsity and learning quality in a microscopic level. Under mild technical constraints, we see that losing each connection has detrimental effect on learning as it monotonically increases the MSD. On the other hand, capturing agents communications with a graph, we introduce the notion of {\it optimal edge} as the edge whose addition has the most effect on learning in the sense of MSD reduction. We prove that such a friendship is likely to occur between a pair of individuals with  high self-reliance that have the least common neighbors.

\section{Preliminaries}

\subsection{State and Observation Model}

We consider a network consisting of a finite number of agents $\mathcal{V}=\{1,2,...,N\}$. The agents indexed by $i \in \mathcal{V}$ seek {\it the underlying state of the world}, $x_t\in \Real$, which varies over time and evolves according to 
\begin{align}
x_{t+1}=ax_t+r_t, \label{state}
\end{align}
where $r_t$ is a zero mean {\it innovation}, which is independent over time with finite variance $\mathbb{E}[r_t^2]=\sigma_r^2$, and $a\in \Real$ is the expected {\it rate of change} of the state of the world, assumed to be available to all agents, and could potentially be greater than unity. We assume the {\it initial state} $x_0$ is a finite random variable drawn independently by the nature. At time period $t$, each agent $i$ receives a private signal $y_{i,t}\in \Real$, which is a noisy version of $x_t$, and can be described by the linear equation
\begin{align}
y_{i,t}=x_t+w_{i,t} , \label{observation}
\end{align}
where $w_{i,t}$ is  a zero mean {\it observation noise} with finite variance $\mathbb{E}[w^2_{i,t}]=\sigma^2_w$, and it is assumed to be independent over time and agents, and uncorrelated to the innovation noise. Each agent $i$ forms an {\it estimate} or a {\it belief} about the true value of $x_t$ at time $t$ conforming to an update mechanism that will be discussed later. Much of the difficulty of this problem stems from the hardness of tracking a dynamic state with noisy observations, especially when $|a|>1$, and communication mitigates the difficulty by virtue of reducing the effective noise. 

\subsection{Communication Structure}
Agents communicate with each other to update their beliefs about the underlying state of the world. The interaction between agents is captured  by an undirected  graph $\mathcal{G}=(\mathcal{V},\mathcal{E})$, where $\mathcal{V}$ is the set of agents, and if there is a link between agent $i$ and agent $j$, then $\{i,j\} \in \mathcal{E}$. We let $\bar{\mathcal{N}}_i=\{j\in \mathcal{V}: \{i,j\}\in \mathcal{E}\}$ be the set of neighbors of agent $i$, and $\mathcal{N}_i=\bar{\mathcal{N}}_i \cup \{ i \}$. Each agent $i$ can only communicate with her neighbors, and assigns a weight $p_{ij}>0$ for any $j\in \bar{\mathcal{N}}_i$. We also let $p_{ii}\geq 0$ denote the {\it self-reliance} of agent $i$. 

\begin{assumption}\label{A1}
The communication matrix $P=[p_{ij}]$ is symmetric and doubly stochastic, i.e., it satisfies
\begin{align*}
p_{ij}\geq 0  \ \ \ \ \ ,  \ \ \ \ \  p_{ij}=p_{ji} \ \ \ \ , \  \ \ \text{and} \  \  \sum_{j\in \mathcal{N}_i}p_{ij}= \sum_{j=1}^Np_{ij}=1.
\end{align*}
We further assume the eigenvalues of $P$ are in descending order and satisfy
\begin{align*}
-1<\lambda_N(P) \leq ...\leq \lambda_2(P) < \lambda_1(P)=1.
\end{align*}
\end{assumption}

\subsection{Estimate Updates}
The goal of agents is to learn $x_{t}$ in a collaborative manner by making sequential predictions. From optimization perspective, this can be cast as a quest for online minimization of the separable, global, time-varying cost function 
\begin{align}
\min_{\ \ \ \ \ \bar{x}\in \Real} f_t(\bar{x})=\frac{1}{N}\sum_{i=1}^N\bigg( \hat{f}_{i,t}(\bar{x})\triangleq \frac{1}{2}\Ex \big(y_{i,t}-\bar{x}\big)^2\bigg)=\frac{1}{N}\sum_{i=1}^N\bigg( \tilde{f}_{i,t}(\bar{x})\triangleq\sum_{j=1}^Np_{ij}\hat{f}_{j,t}(\bar{x})\bigg), \label{problem}
\end{align}
at each time period $t$. One approach to tackle the stochastic learning problem formulated above is to employ {\it distributed dual averaging} regularized by a quadratic {\it proximal function}\cite{duchi2012dual}. To this end, if agent $i$ exploits $\hat{f}_{i,t}$ as the local loss function, she updates her belief as
\begin{align}
\hat{x}_{i,t+1}&=a\bigg(\underbrace{\sum_{j\in \mathcal{N}_i}p_{ij}\hat{x}_{j,t}}_{\text{consensus update}}+\underbrace{\alpha  (y_{i,t}-\hat{x}_{i,t})}_{\text{innovation update}}\bigg), \label{estimator}
\end{align}
while using $\tilde{f}_{i,t}$ as the local loss function results in the following update
\begin{align}
\tilde{x}_{i,t+1}&=a\bigg(\underbrace{\sum_{j\in \mathcal{N}_i}p_{ij}\tilde{x}_{j,t}}_{\text{consensus update}}+\underbrace{\alpha (\sum_{j\in \mathcal{N}_i}p_{ij}y_{j,t}-\tilde{x}_{i,t})}_{\text{innovation update}}\bigg), \label{estimator2}
\end{align}
where $\alpha \in (0,1]$ is a constant step size that agents place for their innovation update, and we refer to it as {\it signal weight}. Equations \eqref{estimator} and \eqref{estimator2} are distinct, {\it single-consensus-step} estimators differing in the choice of the local loss function with \eqref{estimator} using only private observations while \eqref{estimator2} averaging observations over the neighborhood. We analyze both class of estimators noting that one might expect \eqref{estimator2} to perform better than \eqref{estimator} due to more information availability. 

Note that the choice of constant step size provides an insight on the interplay of persistent innovation and learning abilities of the network. We remark that agents can easily \emph{learn} the fixed rate of change $a$ by taking ratios of observations, and we assume that this has been already performed by the agents in the past. The case of a changing $a$ is beyond the scope of the present paper. We also point out that the real-valued (rather than vector-valued) nature of the state is a simplification that forms a clean playground for the study of the effects of social learning, effects of friendships, and other properties of the problem.

\subsection{Error Process}

Defining the local error processes $\er_{i,t}$ and $\Er_{i,t}$, at time $t$ for agent $i$, as 
\begin{align*}
\er_{i,t}\triangleq\hat{x}_{i,t}-{x}_{t} \ \ \ \ \ \ \text{and} \ \ \ \ \ \ \Er_{i,t}\triangleq\tilde{x}_{i,t}-{x}_{t}, 
\end{align*}
and stacking the local errors in vectors $\er_{t}, \Er_{t} \in \Real^{N}$, respectively, such that
\begin{align}
\er_{t}\triangleq[\er_{1,t},...,\er_{N,t}]^\tr\ \ \ \ \ \ \text{and} \ \ \ \ \ \ \Er_{t}\triangleq[\Er_{1,t},...,\Er_{N,t}]^\tr , \label{error}
\end{align}
one can show that the aforementioned collective error processes could be described as a linear dynamical system. 
\begin{lemma}\label{errorprocess}
Given Assumption \ref{A1}, the collective error processes $\er_{t}$ and $\Er_{t}$ defined in \eqref{error} satisfy
\begin{equation}
\er_{t+1}=Q\er_{t}+\hat{s}_t \ \ \ \ \ \ \text{and} \ \ \ \ \ \ \Er_{t+1}=Q\Er_{t}+\tilde{s}_t, \label{errordynamics}
\end{equation}
respectively, where
\begin{align}
Q=a(P-\alpha I_N), \label{errormatrix}
\end{align}
and 
\begin{align}
\hat{s}_t=(\alpha a)[w_{1,t},...,w_{N,t}]^\tr-r_t\mathbf{1}_N \ \ \ \ \ \ \text{and} \ \ \ \ \ \  \tilde{s}_t=(\alpha a)P[w_{1,t},...,w_{N,t}]^\tr-r_t\mathbf{1}_N, 
\end{align}
with $\mathbf{1}_N$ being vector of all ones.
\end{lemma}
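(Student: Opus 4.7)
The plan is a direct substitution: unfold each update rule, subtract the state recursion, and rearrange so that the innovation and observation noises separate cleanly from an error-propagation matrix.

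First I would handle the $\hat{x}$ recursion. Subtracting $x_{t+1}=ax_t+r_t$ from \eqref{estimator} gives
\[
\hat{e}_{i,t+1} \;=\; a\sum_{j\in\mathcal{N}_i} p_{ij}\hat{x}_{j,t} \;+\; a\alpha (y_{i,t}-\hat{x}_{i,t}) \;-\; ax_t \;-\; r_t.
\]
Plugging in the observation model $y_{i,t}=x_t+w_{i,t}$ and using the key identity from double stochasticity, $x_t=\sum_{j}p_{ij}x_t$, lets me convert the $\hat{x}_{j,t}$ and $\hat{x}_{i,t}$ terms into $\hat{e}_{j,t}$ and $\hat{e}_{i,t}$ terms, while the $x_t$ contributions cancel exactly. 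What survives is $a\sum_j p_{ij}\hat{e}_{j,t}-a\alpha\hat{e}_{i,t}+a\alpha w_{i,t}-r_t$. Stacking across $i\in\mathcal{V}$ and recognizing $aP-a\alpha I_N=Q$ yields the claimed vector recursion with driving term $\hat{s}_t=(\alpha a)[w_{1,t},\dots,w_{N,t}]^\tr-r_t\mathbf{1}_N$.

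The $\tilde{x}$ recursion is handled analogously, with one additional bookkeeping step. Starting from \eqref{estimator2} and subtracting $x_{t+1}$, the only new ingredient is the neighborhood-averaged observation
\[
\sum_{j\in\mathcal{N}_i} p_{ij}\, y_{j,t} \;=\; \sum_{j\in\mathcal{N}_i} p_{ij}\, x_t \;+\; \sum_{j\in\mathcal{N}_i} p_{ij}\, w_{j,t} \;=\; x_t \;+\; \sum_{j} p_{ij}\, w_{j,t},
\]
where again double stochasticity collapses the first sum. The rest of the manipulation mirrors the $\hat{x}$ case: the state-dependent pieces recombine into $a(P-\alpha I_N)\tilde{e}_t=Q\tilde{e}_t$, while the noise aggregates to $\tilde{s}_t=(\alpha a)P[w_{1,t},\dots,w_{N,t}]^\tr-r_t\mathbf{1}_N$.

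There is no genuine obstacle here; the proof is essentially a careful rewriting. The only place where one must be attentive is the repeated use of Assumption \ref{A1} (specifically $\sum_j p_{ij}=1$) to turn $x_t$ into $\sum_j p_{ij}x_t$ so that it can be absorbed into the consensus terms and into the neighborhood-averaged observation. Once those cancellations are performed and the equations are written in vector form, the identification of $Q$ and of the two noise vectors is immediate.
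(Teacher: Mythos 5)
Your proposal is correct and follows essentially the same route as the paper: subtract the state recursion, invoke the row-stochasticity $\sum_j p_{ij}=1$ to absorb $x_t$ into the consensus sum, substitute the observation model, and stack into vector form to identify $Q=a(P-\alpha I_N)$ and the noise vectors. The paper writes out only the $\hat{x}$ case and notes the $\tilde{x}$ case is analogous, whereas you also spell out the neighborhood-averaged observation step, but the argument is the same.
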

Throughout the paper, we let $\rho(Q)$, denote the spectral radius of $Q$, which is equal to the largest singular value of $Q$ due to symmetry.

\section{Social Learning: Convergence of Beliefs and Regret Analysis}

In this section, we study the behavior of estimators \eqref{estimator} and \eqref{estimator2} in the mean and mean-square sense, and we provide the regret analysis.
 
In the following proposition, we establish a tight bound for $a$, under which agents can achieve asymptotically unbiased estimates using proper signal weight. 

\begin{proposition} [Unbiased Estimates]\label{Unbiased Estimates} Given the network $\mathcal{G}$ with corresponding communication matrix $P$ satisfying Assumption \ref{A1}, the rate of change of the social network in \eqref{estimator} and \eqref{estimator2} must respect the constraint
\begin{align*}
|a|<\frac{2}{1-\lambda_N(P)},
\end{align*}
to allow agents to form asymptotically unbiased estimates of the underlying state. 
\end{proposition}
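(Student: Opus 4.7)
The plan is to pass to expectations, observe that the error recursions are driven by zero-mean noise, reduce the unbiasedness question to a spectral radius condition on $Q$, and then optimize the signal weight $\alpha$ to get the tightest constraint on $|a|$.

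First I would take expectations on both sides of \eqref{errordynamics}. Since $w_{i,t}$ and $r_t$ are zero-mean, both $\hat{s}_t$ and $\tilde{s}_t$ have zero mean (the second one uses $P\,\Ex[w_{\cdot,t}]=0$). Hence $\Ex[\er_{t+1}]=Q\,\Ex[\er_t]$ and similarly for $\Er_t$, which iterates to $\Ex[\er_t]=Q^{t}\Ex[\er_0]$. Because $\Ex[\er_0]$ is a fixed (nonzero in general) vector determined by the initial beliefs and the unknown $x_0$, the beliefs are asymptotically unbiased \emph{regardless of initialization} if and only if $Q^{t}\to 0$, equivalently $\rho(Q)<1$.

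Next I would compute $\rho(Q)$ from \eqref{errormatrix}. Since $P$ is symmetric by Assumption~\ref{A1}, $Q=a(P-\alpha I_N)$ is symmetric with eigenvalues $a(\lambda_i(P)-\alpha)$, so
\[
\rho(Q)=|a|\,\max_{1\le i\le N}|\lambda_i(P)-\alpha|=|a|\,\max\bigl(|1-\alpha|,\;|\lambda_N(P)-\alpha|\bigr),
\]
the last equality because the function $\lambda\mapsto|\lambda-\alpha|$ on $[\lambda_N(P),1]$ attains its maximum at one of the endpoints. For $\alpha\in(0,1]$ and $\lambda_N(P)\in(-1,1)$ this becomes $|a|\,\max(1-\alpha,\,\alpha-\lambda_N(P))$ whenever $\alpha\ge\lambda_N(P)$, which is automatic once $\alpha>0\ge\lambda_N(P)$ or more generally at the optimum.

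Then I would minimize over $\alpha$. The two quantities $1-\alpha$ and $\alpha-\lambda_N(P)$ are affine with opposite slopes, so their max is minimized when they are equal, giving the optimal choice $\alpha^*=(1+\lambda_N(P))/2\in(0,1]$ and the minimum value $(1-\lambda_N(P))/2$. Therefore agents can make $\rho(Q)<1$ by some admissible choice of $\alpha$ if and only if
\[
|a|\,\frac{1-\lambda_N(P)}{2}<1,\qquad\text{i.e.,}\qquad|a|<\frac{2}{1-\lambda_N(P)},
\]
and this bound is tight since for any $|a|$ violating it no $\alpha$ can drive $\rho(Q)$ below $1$. The identical argument applied to $\Er_t$ (the matrix $Q$ is the same) yields the same condition. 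The only delicate step, and the one I would double-check, is ruling out the edge cases $\alpha=1$ or $\lambda_N(P)$ very close to $-1$, but both are covered because $\alpha^*\in(0,1]$ strictly whenever $\lambda_N(P)>-1$, which is exactly the spectral assumption imposed in Assumption~\ref{A1}.
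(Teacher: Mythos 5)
Your proof is correct and follows essentially the same route as the paper's: reduce unbiasedness to $\rho(Q)<1$ via the zero-mean noise, express $\rho(Q)=|a|\max\{1-\alpha,\,|\alpha-\lambda_N(P)|\}$, and minimize over $\alpha$ at the intersection point $\alpha^*=(1+\lambda_N(P))/2$. Your additional checks (endpoint attainment of the max, admissibility of $\alpha^*$) are sound but do not change the argument.
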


Proposition \ref{Unbiased Estimates} determines the trade-off between the rate of change and the network structure. In other words, changing less than the rate given in the statement of the proposition, individuals can always track $x_t$ with bounded variance by selecting an appropriate signal weight. However, the proposition does not make any statement on the learning quality. To capture that, we define the steady state Mean Square Deviation(MSD) of the network from the truth as follows.   

\begin{definition}[(Steady State-)Mean Square Deviation]\label{definition of MSD}
Given the network $\mathcal{G}$ with a rate of change which allows unbiased estimation, the {\it steady state} of the error processes in \eqref{errordynamics} is defined as follows 
\begin{align*}
\hat{\Sigma} \triangleq\lim_{ \ t \rightarrow \infty}\Ex[\er_{t}\er_{t}^\tr] \ \ \ \ \ \text{and} \ \ \ \ \ \tilde{\Sigma} \triangleq\lim_{ \ t \rightarrow \infty}\Ex[\Er_{t}\Er_{t}^\tr].
\end{align*}
Hence, the (Steady State-)Mean Square Deviation of the network is the deviation from the truth in the mean-square sense, per individual, and it is defined as
\begin{align*}
\hat{\text{MSD}} \triangleq\frac{1}{N}\text{Tr}(\hat{\Sigma})  \ \ \ \ \ \text{and} \ \ \ \ \ \tilde{\text{MSD}} \triangleq\frac{1}{N}\text{Tr}(\tilde{\Sigma}).
\end{align*}
\end{definition}

\begin{theorem}[MSD]\label{NMSD}
Given the error processes \eqref{errordynamics} with $\rho(Q)<1$, the steady state MSD for \eqref{estimator} and \eqref{estimator2} is a function of the communication matrix $P$, and the signal weight $\alpha$ as follows
\begin{align}
\hat{\text{MSD}}(P,\alpha)=R_{MSD}(\alpha)+ \hat{W}_{MSD}(P,\alpha) \ \ \ \ \ \ \ \ \ \ \tilde{\text{MSD}}(P,\alpha)=R_{MSD}(\alpha)+ \tilde{W}_{MSD}(P,\alpha) \label{MSD},
\end{align}
where 
\begin{align}
R_{MSD}(\alpha)\triangleq\frac{\sigma^2_r}{1-a^2(1-\alpha)^2}, \label{decomposition1}
\end{align}
and{\small
\begin{align}
\hat{W}_{MSD}(P,\alpha)\triangleq\frac{1}{N}\sum_{i=1}^N \frac{a^2\alpha^2\sigma^2_w}{1-a^2(\lambda_i(P)-\alpha)^2} \  \ \ \text{and} \  \ \  \tilde{W}_{MSD}(P,\alpha)\triangleq\frac{1}{N}\sum_{i=1}^N \frac{a^2\alpha^2\sigma^2_w\lambda^2_i(P)}{1-a^2(\lambda_i(P)-\alpha)^2}\label{decomposition2}.
\end{align}}
\end{theorem}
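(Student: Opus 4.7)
The plan is to solve a discrete Lyapunov equation for each of $\hat\Sigma$ and $\tilde\Sigma$ by passing to the eigenbasis of the communication matrix $P$, and reading off the traces.

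First, using Lemma \ref{errorprocess} and the fact that $\hat s_t$ is a zero-mean noise independent of the past, so that $\mathbb{E}[\hat\xi_t \hat s_t^\top]\to 0$ at the (asymptotically unbiased) steady state, I would take the outer product of $\hat\xi_{t+1}=Q\hat\xi_t+\hat s_t$ with itself, take expectations, and let $t\to\infty$. Since $\rho(Q)<1$ the limit exists and $Q$ is symmetric, yielding the discrete Lyapunov equations
\begin{equation*}
\hat\Sigma = Q\hat\Sigma Q + \hat S,\qquad \tilde\Sigma = Q\tilde\Sigma Q + \tilde S,
\end{equation*}
where $\hat S\triangleq\mathbb{E}[\hat s_t\hat s_t^\top]$ and $\tilde S\triangleq\mathbb{E}[\tilde s_t\tilde s_t^\top]$. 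A direct computation using the independence of $w_{i,t}$ across agents and its uncorrelatedness with $r_t$ gives
\begin{equation*}
\hat S = \alpha^2 a^2\sigma_w^2\, I_N + \sigma_r^2\,\mathbf{1}_N\mathbf{1}_N^\tr,\qquad
\tilde S = \alpha^2 a^2\sigma_w^2\, P^2 + \sigma_r^2\,\mathbf{1}_N\mathbf{1}_N^\tr.
\end{equation*}

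Next, by Assumption \ref{A1}, diagonalize $P=U\Lambda U^\tr$ with $\Lambda=\mathrm{diag}(\lambda_1(P),\dots,\lambda_N(P))$ and $U$ orthogonal whose first column is the Perron eigenvector $\mathbf{1}_N/\sqrt{N}$. Setting $\hat\Sigma'\triangleq U^\tr\hat\Sigma U$ and $D\triangleq a(\Lambda-\alpha I_N)$, the Lyapunov equation becomes $\hat\Sigma' = D\hat\Sigma' D + U^\tr\hat S U$. Because $D$ is diagonal, this decouples entrywise and, using $U^\tr\mathbf{1}_N\mathbf{1}_N^\tr U = N\mathbf{e}_1\mathbf{e}_1^\tr$, the diagonal entries solve
\begin{equation*}
\hat\Sigma'_{11} = \frac{\alpha^2 a^2\sigma_w^2 + N\sigma_r^2}{1-a^2(1-\alpha)^2},\qquad
\hat\Sigma'_{ii} = \frac{\alpha^2 a^2\sigma_w^2}{1-a^2(\lambda_i(P)-\alpha)^2}\quad(i\geq 2).
\end{equation*}
Invariance of the trace under orthogonal conjugation gives $\mathrm{Tr}(\hat\Sigma)=\sum_i\hat\Sigma'_{ii}$, and dividing by $N$ produces the claimed formula $\hat{\mathrm{MSD}}=R_{MSD}(\alpha)+\hat W_{MSD}(P,\alpha)$.

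For $\tilde\Sigma$ the only change is that $U^\tr P^2 U=\Lambda^2$, so the diagonal of the transformed forcing becomes $\alpha^2 a^2\sigma_w^2\lambda_i^2(P)+N\sigma_r^2\delta_{i1}$. Because $\lambda_1(P)=1$ the innovation contribution $R_{MSD}$ is identical for both estimators, while the noise sum picks up the factor $\lambda_i^2(P)$ exactly as in $\tilde W_{MSD}$. The main obstacle I anticipate is the careful justification of the cross-term cancellation $\mathbb{E}[\hat\xi_t\hat s_t^\top]\to 0$ and the existence of the steady-state limit itself: this requires combining the independence structure of the driving noises with Proposition \ref{Unbiased Estimates} and the contraction $\rho(Q)<1$; once those pieces are in place, the rest is linear algebra in the diagonalizing basis.
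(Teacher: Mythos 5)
Your proposal is correct and follows essentially the same route as the paper: both reduce to the discrete Lyapunov equation $\Sigma = Q\Sigma Q + S$ with the same noise covariances and exploit that $Q$ and $P$ are simultaneously diagonalizable with $\mathbf{1}_N/\sqrt{N}$ as the Perron eigenvector; the paper solves via the series $\sum_{\tau}Q^{\tau}SQ^{\tau}$ expanded in eigenvectors, while you conjugate the equation into the eigenbasis and solve the decoupled diagonal entries, which is the same computation. One small remark: the cross term $\Ex[\hat{\xi}_t\hat{s}_t^{\tr}]$ is exactly zero for every $t$ (since $\hat{s}_t$ is zero-mean and independent of the past), so the "main obstacle" you anticipate requires no asymptotic argument at all.
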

Theorem \ref{NMSD} shows that the steady state MSD is governed by all eigenvalues of $P$ contributing to $W_{MSD}$ pertaining to the observation noise, while $R_{MSD}$ is the penalty incurred due to the innovation noise. Moreover, \eqref{estimator2} outperforms \eqref{estimator} due to richer information diffusion, which stresses the importance of global loss function decomposition.  

One might advance a conjecture that a complete network, where all individuals can communicate with each other, achieves a lower steady state MSD in the learning process since it provides the most information diffusion among other networks. This intuitive idea is discussed in the following corollary beside a few examples.

\begin{corollary}\label{example}
Denoting the complete, star, and cycle graphs on $N$ vertices by $K_N$, $S_N$, and $C_N$, respectively,  and denoting their corresponding Laplacians by $L_{K_N}$, $L_{S_N}$, and $L_{C_N}$, under conditions of Theorem \ref{NMSD}, 
\begin{description}
\item [(a)] For $P=I-\frac{1-\alpha}{N}L_{K_N}$, we have 
\begin{align}
\lim_{N\rightarrow \infty}\hat{\text{MSD}}_{K_N}=R_{MSD}(\alpha)+a^2\alpha^2\sigma^2_w.\label{complete}
\end{align}
\item [(b)] For $P=I-\frac{1-\alpha}{N}L_{S_N}$, we have 
\begin{align}
\lim_{N\rightarrow \infty}\hat{\text{MSD}}_{S_N}=R_{MSD}(\alpha)+\frac{a^2\alpha^2\sigma^2_w}{1-a^2(1-\alpha)^2}.\label{star}
\end{align}
\item [(c)] For $P=I-\beta L_{C_N}$, where $\beta$ must preserve unbiasedness, we have 
\begin{align}
\lim_{N\rightarrow \infty}\hat{\text{MSD}}_{C_N}=R_{MSD}(\alpha)+\int_0^{2\pi}\frac{a^2\alpha^2\sigma^2_w}{1-a^2(1-\beta(2-2\cos(\tau))-\alpha)^2}\frac{d_\tau}{2\pi}.\label{cycle}
\end{align}
\item [(d)] For $P=I-\frac{1}{N}L_{K_N}$, we have 
\begin{align}
\lim_{N\rightarrow \infty}\tilde{\text{MSD}}_{K_N}=R_{MSD}(\alpha).\label{complete2}
\end{align}
\end{description}
\end{corollary}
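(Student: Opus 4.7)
By Theorem \ref{NMSD}, both MSDs split as $R_{MSD}(\alpha)+W_{MSD}(P,\alpha)$, where only $W_{MSD}$ depends on $P$, and it depends on $P$ only through its spectrum $\{\lambda_i(P)\}_{i=1}^{N}$. So the plan is: for each listed graph family, write down the Laplacian spectrum, deduce the spectrum of $P=I-\gamma L$ (with $\gamma$ the appropriate coefficient in each part), plug into the explicit formula \eqref{decomposition2} for $W_{MSD}$, and pass to the limit as $N\to\infty$. In parts (a), (b), (d) the limit reduces to extracting a single dominant term from a short sum; in part (c) it is a Riemann-sum argument.

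\textbf{Spectra.} For $K_N$, $L_{K_N}=NI-J$ (with $J$ the all-ones matrix), so $L_{K_N}$ has eigenvalues $0$ (once) and $N$ (multiplicity $N-1$). Hence $P=I-\tfrac{1-\alpha}{N}L_{K_N}$ has eigenvalues $1$ and $\alpha$ (multiplicity $N-1$), and $P=I-\tfrac1N L_{K_N}$ has eigenvalues $1$ and $0$ (multiplicity $N-1$). For $S_N$, $L_{S_N}$ is well known to have spectrum $\{0,\,1^{(N-2)},\,N\}$, giving $P$-eigenvalues $\{1,\ (1-\tfrac{1-\alpha}{N})^{(N-2)},\ \alpha\}$. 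For $C_N$, $L_{C_N}$ is circulant with eigenvalues $2-2\cos(2\pi k/N)$ for $k=0,\dots,N-1$, so $P=I-\beta L_{C_N}$ has eigenvalues $1-\beta(2-2\cos(2\pi k/N))$.

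\textbf{Parts (a), (b), (d).} Substitute the spectra into \eqref{decomposition2}. In part (a) the sum has one $\lambda=1$ term contributing $O(1/N)$ and $N-1$ identical $\lambda=\alpha$ terms, each giving $\tfrac{a^2\alpha^2\sigma_w^2}{1-a^2(\alpha-\alpha)^2}=a^2\alpha^2\sigma_w^2$; the factor $(N-1)/N$ tends to $1$, yielding \eqref{complete}. In part (b) the middle block contributes $\tfrac{N-2}{N}\cdot\tfrac{a^2\alpha^2\sigma_w^2}{1-a^2((1-\alpha)(1-1/N))^2}$, which converges to $\tfrac{a^2\alpha^2\sigma_w^2}{1-a^2(1-\alpha)^2}$ while the two boundary terms vanish as $1/N$; this gives \eqref{star}. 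In part (d) one uses $\tilde W_{MSD}$, where the extra factor $\lambda_i^2$ kills the entire $\lambda=0$ block; only the single $\lambda=1$ term survives, and it contributes $O(1/N)$, so $\tilde W_{MSD}\to 0$, giving \eqref{complete2}.

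\textbf{Part (c).} The sum becomes
\begin{align*}
\hat W_{MSD}(I-\beta L_{C_N},\alpha)=\frac{1}{N}\sum_{k=0}^{N-1}\frac{a^2\alpha^2\sigma_w^2}{1-a^2\bigl(1-\beta(2-2\cos(2\pi k/N))-\alpha\bigr)^2},
\end{align*}
which is a Riemann sum (with mesh $2\pi/N$ and integrand renormalized by the $1/(2\pi)$ in $d\tau/(2\pi)$) for the integral in \eqref{cycle}. Once I know the integrand is bounded and continuous on $[0,2\pi]$, standard convergence of Riemann sums yields \eqref{cycle}. Continuity is clear; boundedness is the one non-cosmetic point and is the step I expect to need the most care: it follows from the standing assumption $\rho(Q)<1$ together with the choice of $\beta$ that preserves unbiasedness, since this forces $|a(\lambda_i(P)-\alpha)|\le\rho(Q)<1$ uniformly in $i$, and the same bound then holds uniformly in $\tau\in[0,2\pi]$ because $1-\beta(2-2\cos\tau)$ ranges over the closed interval spanned by the $P$-eigenvalues. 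Adding $R_{MSD}(\alpha)$ to each of the limits above completes the four parts.
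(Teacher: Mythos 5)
Your proof is correct and follows the same route as the paper: identify the Laplacian spectra of $K_N$, $S_N$, $C_N$, substitute the resulting eigenvalues of $P$ into the MSD formula of Theorem \ref{NMSD}, and take the limit in $N$ (the paper states this in one line; you supply the dominant-term bookkeeping and the Riemann-sum argument for the cycle, which the paper leaves implicit). No gaps.
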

\begin{proof}
Noting that the spectrum of $L_{K_N}$, $L_{S_N}$ and $L_{C_N}$ are, respectively\cite{mesbahi2010graph},
$\{\lambda_N=0,\lambda_{N-1}=N,...,\lambda_1=N\}, 
\{\lambda_N=0,\lambda_{N-1}=1,...,\lambda_2=1,\lambda_1=N\}$, and $
\{\lambda_i=2-2\cos(\frac{2\pi i}{N})\}_{i=0}^{N-1}, 
$
substituting each case in \eqref{MSD}, and taking the limit over $N$, the proof follows immediately.
\end{proof}

To study the effect of communication let us consider the estimator \eqref{estimator}. Under purview of Theorem \ref{NMSD} and Corollary \ref{example}, the ratio of the steady state MSD for a complete network \eqref{complete} versus a fully disconnected network$(P=I_N)$ can be computed as
\begin{align*}
\lim_{N\rightarrow \infty} \frac{\hat{\text{MSD}}_{K_N}}{\hat{\text{MSD}}_{disconnected}}=\frac{\sigma^2_r+a^2\alpha^2\sigma^2_w(1-a^2(1-\alpha)^2)}{\sigma^2_r+a^2\alpha^2\sigma^2_w}\approx 1-a^2(1-\alpha)^2,
\end{align*}
 for $\sigma^2_r \ll \sigma^2_w$. The ratio above can get arbitrary close to zero which, indeed, highlights the influence of communication on the learning quality.

We now consider Kalman Filter(KF)\cite{kalman1960new} as the optimal centralized counterpart of \eqref{estimator2}. It is well-known that the steady state KF satisfies a Riccati equation, and when the parameter of interest is scalar, the Riccati equation simplifies to a quadratic with the positive root
\begin{align*}
\Sigma_{KF}=\frac{a^2\sigma^2_w-\sigma^2_w+N\sigma^2_r+\sqrt{(a^2\sigma^2_w-\sigma^2_w+N\sigma^2_r)^2+4N\sigma^2_w\sigma^2_r}}{2N}.
\end{align*} 
Therefore, comparing with the complete graph \eqref{complete2}, we have
\begin{align*}
\lim_{N\rightarrow \infty} \Sigma_{KF}=\sigma^2_r \leq \frac{\sigma^2_r}{1-a^2(1-\alpha)^2},
\end{align*}
and the upper bound can be made tight by choosing $\alpha=1$ for $|a|< \frac{1}{|\lambda_N(P)-1|}$. If $|a|\geq \frac{1}{|\lambda_N(P)-1|}$ we should choose an $\alpha<1$ to preserve unbiasedness as well.  

On the other hand, to evaluate the performance of estimator \eqref{estimator}, we consider the upper bound 
\begin{align}
\text{MSD}_{{Bound}}=\frac{\sigma^2_r+\alpha^2\sigma^2_w}{\alpha}, \label{asu}
\end{align}
 derived in\cite{acemoglu2008convergence}, for $a=1$ via a distributed estimation scheme.
For simplicity, we assume $\sigma^2_w=\sigma^2_r=\sigma^2$, and let $\beta$ in \eqref{cycle} be any diminishing function of $N$. Optimizing \eqref{complete}, \eqref{star}, \eqref{cycle}, and \eqref{asu} over $\alpha$, we obtain
\begin{align*}
\lim_{N\rightarrow \infty}\hat{\text{MSD}}_{K_N}\approx 1.55\sigma^2<\lim_{N\rightarrow \infty}\hat{\text{MSD}}_{S_N}=\lim_{N\rightarrow \infty}\hat{\text{MSD}}_{C_N}\approx 1.62\sigma^2<\text{MSD}_{Bound}=2\sigma^2,
\end{align*}
which suggests a noticeable improvement in learning even in the star and cycle networks where the number of individuals and connections are in the same order.

\subsection*{Regret Analysis}

We now turn to finite-time regret analysis of our methods. The average loss of all agents in predicting the state, up until time $T$, is 
$$\frac{1}{T}\sum_{t=1}^T \frac{1}{N}\sum_{i=1}^N (\hat{x}_{i,t}-x_{t})^2 = \frac{1}{T}\sum_{t=1}^T \frac{1}{N} \text{Tr} (\er_t\er_t^\tr) \ . $$
As motivated earlier, it is not possible, in general, to drive this average loss to zero, and we need to subtract off the limit. We thus define \emph{regret} as
$$R_T \triangleq \frac{1}{T}\sum_{t=1}^T \frac{1}{N} \text{Tr} (\er_t\er_t^\tr)  - \frac{1}{T}\sum_{t=1}^T \frac{1}{N} \text{Tr} (\hat{\Sigma}) =  \frac{1}{N} \text{Tr} \left(\frac{1}{T}\sum_{t=1}^T \er_t\er_t^\tr-\hat{\Sigma} \right) \ , $$
where $\hat{\Sigma}$ is from Definition~\ref{definition of MSD}. We then have for the  spectral norm $\|\cdot\|$ that
\begin{align}
	R_T \leq \left\|\frac{1}{T}\sum_{t=1}^T \xi_{t}\xi_{t}^\tr-\Sigma \right\|, \label{regret}
\end{align} 
where we dropped the distinguishing notation between the two estimators since the analysis works for both of them. We, first, state a technical lemma from \cite{tropp2012user} that we invoke later for bounding the quantity $R_T$. For simplicity, we assume that magnitudes of both innovation and observation noise are bounded.
\begin{lemma}\label{technical}
Let $\{s_t\}_{t=1}^T$ be an independent family of vector valued random variables, and let $H$ be a function that maps $T$ variables to a self-adjoint matrix of dimension $N$. Consider a sequence $\{A_t \}_{t=1}^T$ of fixed self-adjoint matrices that satisfy  
\begin{align*}
\bigg(H(\omega_1,...,\omega_t,...,\omega_T)-H(\omega_1,...,\omega'_t,...,\omega_T)\bigg)^2 \preceq A_t^2, 
\end{align*}
where $\omega_i$ and $\omega'_i$ range over all possible values of $s_i$ for each index $i$. Letting $\text{Var}=\|\sum_{t=1}^T A_{t}^2\|,$ for all $c\geq 0$, we have
\begin{align*}
\mathbb{P}\bigg\{ \big\| H(s_1,...,s_T)-\Ex[H(s_1,...,s_T)] \big\| \geq c  \bigg\} \leq Ne^{-c^2/8\text{Var}}.
\end{align*}
\end{lemma}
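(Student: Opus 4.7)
The plan is to reduce Lemma~\ref{technical} to a matrix Azuma inequality via a Doob martingale decomposition, in the spirit of the classical scalar bounded differences inequality.

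First, I would introduce the Doob martingale $M_t \triangleq \Ex[H(s_1,\ldots,s_T)\mid s_1,\ldots,s_t]$ for $t=0,1,\ldots,T$, so that $M_0 = \Ex[H(s_1,\ldots,s_T)]$ and $M_T = H(s_1,\ldots,s_T)$. The telescoping identity
\begin{align*}
H(s_1,\ldots,s_T) - \Ex[H(s_1,\ldots,s_T)] \;=\; \sum_{t=1}^T D_t, \qquad D_t \triangleq M_t - M_{t-1},
\end{align*}
rewrites the deviation on the left-hand side of the desired bound as a sum of self-adjoint matrix martingale differences adapted to the filtration generated by $s_1,\ldots,s_t$.

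The next step is to control each $D_t^2$ in the semidefinite order by the fixed matrix $A_t^2$. Letting $s'_t$ be an independent copy of $s_t$ and using independence of $\{s_i\}_{i=1}^T$, one obtains the symmetrization
\begin{align*}
D_t = \Ex\bigl[H(s_1,\ldots,s_t,\ldots,s_T) - H(s_1,\ldots,s'_t,\ldots,s_T)\,\big|\, s_1,\ldots,s_t\bigr].
\end{align*}
Because the map $X\mapsto X^2$ is operator convex, the operator Jensen inequality applied to this conditional expectation yields
\begin{align*}
D_t^2 \;\preceq\; \Ex\!\left[\bigl(H(s_1,\ldots,s_t,\ldots,s_T) - H(s_1,\ldots,s'_t,\ldots,s_T)\bigr)^2 \,\big|\, s_1,\ldots,s_t\right] \;\preceq\; A_t^2,
\end{align*}
where the final step invokes the hypothesis of the lemma uniformly over the possible values of $s_t$ and $s'_t$.

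With the semidefinite bound $D_t^2 \preceq A_t^2$ in hand, I would invoke the matrix Azuma inequality for self-adjoint martingale difference sequences from \cite{tropp2012user}, applied to both $M_T - M_0$ and $-(M_T - M_0)$ and combined by a union bound to handle the spectral norm. This gives, for every $c\geq 0$,
\begin{align*}
\mathbb{P}\bigl\{\|M_T - M_0\|\geq c\bigr\} \;\leq\; N\exp\!\bigl(-c^2/(8\,\text{Var})\bigr), \qquad \text{Var}=\Bigl\|\sum_{t=1}^T A_t^2\Bigr\|,
\end{align*}
which is exactly the stated conclusion since $M_T - M_0 = H(s_1,\ldots,s_T) - \Ex[H(s_1,\ldots,s_T)]$. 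The main obstacle is the matrix-valued bounded-differences step: passing from the scalar-looking hypothesis on $A_t^2$ to the PSD bound $D_t^2 \preceq A_t^2$ requires the combination of operator convexity of the square with an operator Jensen inequality for matrix-valued conditional expectations, which is genuinely stronger than its scalar analogue. Once that PSD bound is established, matrix Azuma is invoked as a black box.
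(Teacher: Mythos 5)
The paper does not prove this lemma at all: it is imported verbatim from \cite{tropp2012user} (it is the matrix bounded-differences inequality, Corollary~7.5 there), so there is no in-paper proof to compare against. Your sketch is precisely the standard argument behind that result: Doob martingale, symmetrization of the difference $D_t$ via an independent copy $s_t'$, the operator Jensen inequality for the operator-convex map $X\mapsto X^2$ to get $D_t^2\preceq A_t^2$, and then matrix Azuma; this is correct and is essentially Tropp's own proof. One small mismatch worth noting: matrix Azuma controls $\lambda_{\max}(M_T-M_0)$ with prefactor $N$, so the union bound over $\pm(M_T-M_0)$ that you (correctly) need in order to control the spectral norm yields $2N e^{-c^2/(8\,\text{Var})}$ rather than the $N e^{-c^2/(8\,\text{Var})}$ stated in the lemma --- the looseness is in the paper's restatement of Tropp's result, not in your argument, and it is immaterial to the downstream regret bound.
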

\begin{theorem}\label{Regret Thm}
Under conditions of Theorem \ref{NMSD} together with boundedness of noise $\max_{t\leq T}\|s_t\| \leq s$ for some $s>0$, the regret function defined in \eqref{regret} satisfies
\begin{align}
R_T& \leq \frac{1}{T}\bigg(\frac{\|\xi_0\|^2}{1-\rho^2(Q)}\bigg)+\frac{1}{T}\bigg(\frac{2 s \| \xi_0 \| }{\big(1-\rho(Q)\big)^2}\bigg)+\frac{1}{T}\bigg(\frac{s^2}{\big(1-\rho^2(Q)\big)^2}\bigg)+\frac{1}{\sqrt{T}}\frac{8s^2\sqrt{2\log{\frac{N}{\delta}}}}{(1-\rho(Q))^2},
\end{align}
with probability at least $1-\delta$.
\end{theorem}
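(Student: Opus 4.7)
The plan is to unroll the recursion from Lemma~\ref{errorprocess} as $\xi_t = Q^t \xi_0 + \eta_t$ with $\eta_t \triangleq \sum_{k=0}^{t-1} Q^{t-1-k} s_k$, and to expand $\xi_t\xi_t^\tr$ so that
\begin{align*}
\frac{1}{T}\sum_{t=1}^T \xi_t\xi_t^\tr - \Sigma
&= \underbrace{\frac{1}{T}\sum_{t=1}^T Q^t \xi_0\xi_0^\tr Q^t}_{(I)}
 + \underbrace{\frac{1}{T}\sum_{t=1}^T \bigl(Q^t \xi_0 \eta_t^\tr + \eta_t \xi_0^\tr Q^t\bigr)}_{(II)} \\
&\quad + \underbrace{\Bigl(\tfrac{1}{T}\sum_{t=1}^T \eta_t\eta_t^\tr - \Sigma\Bigr)}_{(III)}.
\end{align*}
The four terms in the theorem will correspond, in order, to $(I)$, $(II)$, the bias part of $(III)$, and the concentration part of $(III)$.

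For the deterministic pieces I would exploit $\|Q^t\|=\rho(Q)^t$ (by symmetry of $Q$) together with the geometric bound $\|\eta_t\| \le s/(1-\rho(Q))$ that follows from $\|s_t\|\le s$. Telescoping geometric series then give $\|(I)\| \le \|\xi_0\|^2/[T(1-\rho(Q)^2)]$ and $\|(II)\| \le 2s\|\xi_0\|/[T(1-\rho(Q))^2]$. For the bias of $(III)$, I would write $\Sigma = \sum_{k\ge 0}Q^k S Q^k$ where $S=\Ex[s_t s_t^\tr]$, note $\|S\|\le s^2$ (from $s_t s_t^\tr \preceq \|s_t\|^2 I$), and bound the tail by $\|\Ex[\tfrac{1}{T}\sum_t\eta_t\eta_t^\tr]-\Sigma\| \le \tfrac{s^2}{T}\sum_t \rho(Q)^{2t}/(1-\rho(Q)^2) \le s^2/[T(1-\rho(Q)^2)^2]$.

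The concentration piece $H - \Ex[H]$ with $H(s_0,\ldots,s_{T-1}) = \tfrac{1}{T}\sum_t \eta_t\eta_t^\tr$ is handled by Lemma~\ref{technical}. Replacing $s_k$ with an independent copy $s'_k$ perturbs $\eta_t$ by $\Delta_t = Q^{t-1-k}(s'_k-s_k)$ for $t>k$, so the change in $\eta_t\eta_t^\tr$ equals $\eta_t\Delta_t^\tr + \Delta_t\eta_t^\tr + \Delta_t\Delta_t^\tr$. Combining $\|\Delta_t\|\le 2s\,\rho(Q)^{t-1-k}$ with the uniform bound on $\|\eta_t\|$ and summing two geometric tails gives $\|H-H'\| \le 8s^2/[T(1-\rho(Q))^2]$, so we may take $A_k$ to be the scaled identity with this operator norm, yielding $\operatorname{Var} = O\!\bigl(s^4/[T(1-\rho(Q))^4]\bigr)$. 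Inverting the tail bound in Lemma~\ref{technical} at level $\delta$ then produces the claimed $1/\sqrt{T}$ term.

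The main obstacle is the last step: verifying the matrix-PSD hypothesis $(H-H')^2 \preceq A_k^2$ of Lemma~\ref{technical} from an operator-norm bound on $H-H'$, and making the constants on $(1-\rho(Q))^{-2}$ line up cleanly. Here I would use $(1-\rho(Q))^2 \le 1-\rho(Q)^2$ to absorb both the cross-term and the quadratic-term contributions into a single denominator, and then assemble the four estimates by a triangle inequality to conclude the proof.
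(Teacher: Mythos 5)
Your proposal follows essentially the same route as the paper's proof: the same unrolling of the error recursion, the same four-way decomposition into initial-condition, cross, bias, and concentration terms, and the same application of Lemma~\ref{technical} via a bounded-differences estimate on $H$. The only discrepancy is a factor of $2$ in the concentration step: expanding $\eta'_t\eta_t'^\tr-\eta_t\eta_t^\tr$ with a separate $\Delta_t\Delta_t^\tr$ term gives you $8s^2/(1-\rho(Q))^2$ where the paper gets $4s^2/(1-\rho(Q))^2$ by bounding $\|\eta'_t\eta_t'^\tr-\eta_t\eta_t^\tr\|\leq(\|\eta_t\|+\|\eta'_t\|)\,\|\eta_t-\eta'_t\|$ directly, so your final constant would be $16$ rather than $8$ in the $1/\sqrt{T}$ term (the PSD hypothesis itself is unproblematic, since $\|H-H'\|\leq c$ with $H-H'$ self-adjoint gives $(H-H')^2\preceq c^2 I_N$).
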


We mention that results that are similar in spirit have been studied for general unbounded stationary ergodic time series in\cite{biau2010nonparametric,gyorfi2007sequential,gyorfi2000strategies} by employing techniques from the online learning literature. On the other hand, our problem has the network structure and the specific evolution of the hidden state, not present in the above works.

\section{The Impact of New Friendships on Social Learning}
In the social learning model we proposed, agents are {\it cooperative} and they aim to accomplish a global objective. In this direction, the network structure contributes substantially to the learning process. In this section, we restrict our attention to estimator \eqref{estimator2}, and characterize the intuitive idea that making(losing) friendships can influence the quality of learning in the sense of decreasing(increasing) the steady state MSD of the network.

To commence, letting $\base_i$ denote the $i$-th unit vector in
the standard basis of $\Real^N$, we exploit the negative semi-definite, edge function matrix 
\begin{align}
\Delta P(i,j)\triangleq-(\base_i-\base_j)(\base_i-\base_j)^\tr,  \label{edge matrix}
\end{align}
for edge addition(removal) to(from) the graph. Essentially, if there is no connection between agents $i$ and $j$, 
\begin{align}
P_\epsilon\triangleq P+ \epsilon \Delta P(i,j), \label{new matrix}
\end{align}
for $\epsilon < \min\{p_{ii},p_{jj}\}$, corresponds to a new communication matrix adding the edge $\{i,j\}$ with a weight $\epsilon$ to the network $\mathcal{G}$, and subtracting $\epsilon$ from self-reliance of agents $i$ and $j$.

\begin{proposition}\label{comparing MSD}
Let $\mathcal{G}^-$ be the network resulted by removing the bidirectional edge $\{i,j\}$ with the weight $\epsilon$ from the network $\mathcal{G}$, so $P_{-\epsilon}$ and $P$ denote the communication matrices associated to $\mathcal{G}^-$ and $\mathcal{G}$, respectively. Given Assumption \ref{A1}, for a fixed signal weight $\alpha$ the following relationship holds
\begin{align}
\tilde{\text{MSD}}(P,\alpha)\leq \tilde{\text{MSD}}(P_{-\epsilon},\alpha), \label{comparison}
\end{align}
as long as $P$ is positive semi-definite, and $|a|<\frac{1}{|\alpha|}$.
\end{proposition}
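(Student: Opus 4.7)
The plan is to use Theorem \ref{NMSD} to reduce the claim to a trace inequality, then prove that inequality by a monotone-interpolation argument along a PSD path between $P$ and $P_{-\epsilon}$. Since $R_{MSD}(\alpha)$ is independent of $P$, the inequality \eqref{comparison} is equivalent to $\tilde{W}_{MSD}(P,\alpha) \leq \tilde{W}_{MSD}(P_{-\epsilon},\alpha)$. Defining
\begin{align*}
g(\lambda) \triangleq \frac{\lambda^2}{1 - a^2(\lambda-\alpha)^2},
\end{align*}
the expression in \eqref{decomposition2} packages as $\tilde{W}_{MSD}(P,\alpha) = \frac{a^2\alpha^2\sigma_w^2}{N}\mathrm{Tr}(g(P))$, with $g$ applied via the functional calculus to the symmetric matrix $P$. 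So it suffices to prove $\mathrm{Tr}(g(P)) \leq \mathrm{Tr}(g(P_{-\epsilon}))$.

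Next I would unpack the perturbation. By \eqref{edge matrix}--\eqref{new matrix}, removing the edge $\{i,j\}$ of weight $\epsilon$ corresponds to $P_{-\epsilon} = P + \epsilon v v^\tr$ with $v \triangleq \base_i - \base_j$, a rank-one PSD bump. Consider the linear path $P(t) \triangleq P + t\, v v^\tr$ for $t \in [0,\epsilon]$: since $P \succeq 0$ by hypothesis and $v v^\tr \succeq 0$, every $P(t)$ is PSD, and since $v^\tr \mathbf{1} = 0$ row and column sums are preserved, so $P(t)$ stays doubly stochastic with spectrum in $[0,1]$. The matrix chain rule then gives
\begin{align*}
\frac{d}{dt}\mathrm{Tr}\bigl(g(P(t))\bigr) = \mathrm{Tr}\bigl(g'(P(t)) \, v v^\tr\bigr) = v^\tr g'(P(t)) v,
\end{align*}
and a direct differentiation (after simplification) yields
\begin{align*}
g'(\lambda) = \frac{2\lambda\bigl[1 + a^2\alpha(\lambda - \alpha)\bigr]}{\bigl(1 - a^2(\lambda - \alpha)^2\bigr)^2}.
\end{align*}
On $\lambda \in [0,1]$ with $\alpha \in (0,1]$, the prefactor $2\lambda$ is non-negative, the denominator stays positive throughout the path (both $P$ and $P_{-\epsilon}$ being in the unbiased regime of Definition \ref{definition of MSD}), and the bracket $1 + a^2\alpha(\lambda-\alpha)$ is linear and increasing in $\lambda$, hence minimized at $\lambda = 0$ where it equals $1 - a^2\alpha^2 > 0$ by the hypothesis $|a| < 1/|\alpha|$. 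Therefore $g'(P(t))\succeq 0$, $v^\tr g'(P(t)) v \geq 0$, and integrating from $0$ to $\epsilon$ delivers \eqref{comparison}.

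The main obstacle is pinning down precisely where each hypothesis enters. The PSD assumption on $P$ is what keeps the interpolating spectrum inside $[0,1]$, where the prefactor $\lambda$ in $g'$ cannot flip sign; the bound $|a|<1/|\alpha|$ is exactly what keeps the bracket $1 + a^2\alpha(\lambda-\alpha)$ positive at its worst point $\lambda=0$. Dropping either condition would permit $g'$ to change sign on $[0,1]$ and break the monotone-path trick, forcing one into a more delicate eigenvalue-interlacing argument that seems unpleasant given the non-monotone shape of $g$.
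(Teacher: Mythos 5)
Your proof is correct, and it reaches the result by a route that differs in execution from the paper's, though both ultimately rest on the same two facts: the eigenvalues of $P_{-\epsilon}=P+\epsilon vv^\tr$ dominate those of $P$, and the scalar map $g(\lambda)=\lambda^2/(1-a^2(\lambda-\alpha)^2)$ is nondecreasing on the relevant range. The paper makes this discrete: it pairs $\lambda_k(P)$ with $\lambda_k(P_{-\epsilon})$, invokes Weyl's inequality to get $\lambda_k(P)\leq\lambda_k(P_{-\epsilon})$, and factors the resulting difference of MSD terms as $(\lambda-\mu)\big[(1-a^2\alpha^2)(\lambda+\mu)+2a^2\alpha\lambda\mu\big]$ over a positive denominator --- i.e., monotonicity of $g$ expressed through a difference quotient rather than a derivative. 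You instead differentiate $\mathrm{Tr}\,g(P(t))$ along the path $P(t)=P+t\,vv^\tr$ and check $g'\geq 0$ pointwise; this buys a cleaner view of where each hypothesis bites (positive semi-definiteness keeps the factor $2\lambda$ nonnegative, $|a|<1/|\alpha|$ keeps the bracket positive at its minimum $\lambda=0$), at the cost of needing the trace-derivative identity and stability of every intermediate $P(t)$. That last point is the one step you should nail down explicitly: since $t\mapsto\lambda_k(P(t))$ is continuous and nondecreasing, each $\lambda_k(P(t))$ lies between $\lambda_k(P)$ and $\lambda_k(P_{-\epsilon})$, and the stability condition $|a|\,|\lambda-\alpha|<1$ defines an interval in $\lambda$, so stability at the two endpoints (which the proposition presupposes in order for both MSDs to be finite) propagates to the whole path. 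With that sentence added, your argument is complete and is an attractive differential counterpart to the paper's Weyl-plus-factorization proof.
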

Under a mild technical assumption, Proposition \ref{comparing MSD} suggests that losing connections monotonically increases the MSD, and individuals tend to maintain their friendships to obtain a lower MSD as a global objective. However, this does not elaborate on the existence of individuals with whom losing or making connections could have an immense impact on learning. We bring this concept to light in the following proposition with finding a so-called {\it optimal edge} which provides the most MSD reduction, in case it is added to the network graph.   
\begin{proposition}\label{programming}
Given Assumption \ref{A1}, a positive semi-definite $P$, and $|a|<\frac{1}{|\alpha|}$, to find the optimal edge with a pre-assigned weight $\epsilon \ll 1$ to add to the network $\mathcal{G}$, we need to solve the following optimization problem
\begin{align}
\min_{\{i,j\}\notin \mathcal{E}}  \sum_{k=1}^{N} \bigg(h_k(i,j)\triangleq \frac{z_k(i,j) \big( 2(1-\alpha^2a^2)\lambda_k(P)+2a^2\alpha \lambda^2_k(P) \big)}{\big(1-a^2(\lambda_k(P)-\alpha)^2\big)^2} \bigg), \label{optimization}
\end{align} 
where
\begin{align}
z_k(i,j)\triangleq(v^\tr_k\Delta P(i,j)v_k)\epsilon \label{definitions},
\end{align}
and $\{v_k\}_{k=1}^N$ are the right eigenvectors of $P$. In addition, letting $\zeta_{\max}=\max_{k>1}|\lambda_k(P)-\alpha|,$
\begin{align}
\min_{\{i,j\}\notin \mathcal{E}}  \sum_{k=1}^{N} h_k(i,j) \geq \min_{\{i,j\}\notin \mathcal{E}} \frac{-2\epsilon\big((1-\alpha^2a^2)(p_{ii}+p_{jj})+a^2\alpha ([P^2]_{ii}+[P^2]_{jj}-2[P^2]_{ij}) \big)}{\big(1-a^2\zeta_{\max}^2\big)^2}.\label{lower bound}
\end{align}
\end{proposition}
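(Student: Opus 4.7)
The plan is to treat the addition of edge $\{i,j\}$ with small weight $\epsilon$ as a rank-one perturbation $P_\epsilon = P + \epsilon\Delta P(i,j)$, expand $\tilde{\text{MSD}}$ to first order in $\epsilon$ via eigenvalue perturbation theory plus the chain rule, and then collapse the resulting bound using spectral identities on $P$ and $P^2$. Since $\Delta P(i,j)$ is symmetric and $\epsilon \ll 1$, standard first-order Rayleigh--Schr\"odinger perturbation gives
\[
\lambda_k(P_\epsilon) = \lambda_k(P) + \epsilon\, v_k^\tr \Delta P(i,j)\, v_k + O(\epsilon^2) = \lambda_k(P) + z_k(i,j) + O(\epsilon^2).
\]
Because $v_1 = \mathbf{1}_N/\sqrt{N}$ and $(\base_i-\base_j)^\tr\mathbf{1}_N = 0$, we have $z_1(i,j) = 0$ and $\lambda_1(P_\epsilon)=1$, consistent with $P_\epsilon$ remaining doubly stochastic.

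Since $R_{MSD}(\alpha)$ is $P$-independent, only $\tilde{W}_{MSD}$ contributes to the change in $\tilde{\text{MSD}}$. Setting $f(\lambda)\triangleq a^2\alpha^2\sigma_w^2\lambda^2/(1-a^2(\lambda-\alpha)^2)$, a direct computation gives
\[
f'(\lambda) = \frac{a^2\alpha^2\sigma_w^2\bigl[2(1-a^2\alpha^2)\lambda + 2a^2\alpha\lambda^2\bigr]}{\bigl(1-a^2(\lambda-\alpha)^2\bigr)^2},
\]
so the chain rule yields $\tilde{\text{MSD}}(P_\epsilon,\alpha) - \tilde{\text{MSD}}(P,\alpha) = \frac{a^2\alpha^2\sigma_w^2}{N}\sum_{k=1}^N h_k(i,j) + O(\epsilon^2)$. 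Because the prefactor is positive and independent of $(i,j)$, the edge producing the largest first-order decrease in $\tilde{\text{MSD}}$ is precisely the minimizer in \eqref{optimization}.

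For the lower bound I track signs: positive semi-definiteness of $P$ gives $\lambda_k(P)\geq 0$, and $|a|<1/|\alpha|$ together with $\alpha\in(0,1]$ yields $1-a^2\alpha^2 > 0$, so the bracket $2(1-a^2\alpha^2)\lambda_k + 2a^2\alpha\lambda_k^2$ is nonnegative; since $z_k(i,j) = -\epsilon(v_{k,i}-v_{k,j})^2 \leq 0$, every $h_k(i,j)$ is nonpositive. For $k>1$, $\bigl(1-a^2(\lambda_k(P)-\alpha)^2\bigr)^2 \geq (1-a^2\zeta_{\max}^2)^2 > 0$ by the definition of $\zeta_{\max}$ and the convergence condition $\rho(Q)<1$, and replacing the denominator of a nonpositive fraction by a smaller positive quantity decreases it. The $k=1$ term vanishes on both sides because $z_1(i,j)=0$, so term-by-term
\[
\sum_{k=1}^N h_k(i,j) \geq \frac{\sum_{k=1}^N z_k(i,j)\bigl[2(1-a^2\alpha^2)\lambda_k(P) + 2a^2\alpha\lambda_k^2(P)\bigr]}{\bigl(1-a^2\zeta_{\max}^2\bigr)^2}.
\]

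The final step uses the spectral decompositions $P = \sum_k\lambda_k v_k v_k^\tr$ and $P^2 = \sum_k \lambda_k^2 v_k v_k^\tr$ to collapse the numerator sums:
\[
\sum_{k=1}^N z_k(i,j)\lambda_k(P) = \epsilon\,\text{Tr}\bigl(P\,\Delta P(i,j)\bigr) = -\epsilon(\base_i-\base_j)^\tr P(\base_i-\base_j) = -\epsilon(p_{ii}+p_{jj}-2p_{ij}),
\]
and analogously $\sum_k z_k(i,j)\lambda_k^2(P) = -\epsilon([P^2]_{ii}+[P^2]_{jj}-2[P^2]_{ij})$. Imposing $\{i,j\}\notin\mathcal{E}$ forces $p_{ij}=0$ (though $[P^2]_{ij}$ need not vanish), and substituting produces exactly the right-hand side of \eqref{lower bound} before taking $\min$ over pairs; since a pointwise inequality is preserved under $\min$, the conclusion follows. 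The only delicate step is the sign bookkeeping around the denominator bound, where the direction of the inequality hinges on the numerator being nonpositive; everything else is a standard chain-rule and trace-identity calculation.
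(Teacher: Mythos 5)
Your proposal is correct and follows essentially the same route as the paper: first-order eigenvalue perturbation to get $\lambda_k(P_\epsilon)\approx\lambda_k(P)+z_k(i,j)$, reduction of the first-order change in $\tilde{W}_{MSD}$ to $\sum_k h_k(i,j)$ (your chain-rule computation of $f'(\lambda)$ reproduces exactly the numerator the paper obtains by differencing the two fractions), the same sign bookkeeping with $z_k\le 0$, $z_1=0$, and the uniform denominator bound via $\zeta_{\max}$, and the same trace identities $\sum_k z_k\lambda_k^m=\epsilon\,\mathrm{Tr}(\Delta P(i,j)P^m)$ with $p_{ij}=0$. No gaps.
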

\begin{proof}
Representing the first order approximation of $\lambda_k(P_{\epsilon})$ using definition of $z_k(i,j)$ in \eqref{definitions}, we have $\lambda_k(P_{\epsilon}) \approx \lambda_k(P)+z_k(i,j)$ for $\epsilon \ll1$. Based on Theorem \ref{NMSD}, we now derive
{\small
\begin{align*}
\tilde{\text{MSD}}(P_{\epsilon},\alpha)-\tilde{\text{MSD}}&(P,\alpha)\propto \sum_{k=1}^N \frac{\big(\lambda_k(P_{\epsilon})-\lambda_k(P)\big)\big((1-\alpha^2a^2)(\lambda_k(P_{\epsilon})+\lambda_k(P))+2a^2\alpha \lambda_k(P)\lambda_k(P_{\epsilon}) \big)}{\big(1-a^2(\lambda_k(P)-\alpha)^2\big)\big(1-a^2(\lambda_k(P_{\epsilon})-\alpha)^2\big)} \\
& \approx \sum_{k=1}^N \frac{z_k(i,j) \big(2(1-\alpha^2a^2)\lambda_k(P)+2a^2\alpha \lambda^2_k(P)+(1-\alpha^2a^2+2a^2\alpha \lambda_k(P))z_k(i,j) \big)}{\big(1-a^2(\lambda_k(P)-\alpha)^2\big)\big(1-a^2(\lambda_k(P)-\alpha+z_k(i,j))^2\big)}\\
&=\sum_{k=1}^N \frac{z_k(i,j) \big( 2(1-\alpha^2a^2)\lambda_k(P)+2a^2\alpha \lambda^2_k(P) \big)}{\big(1-a^2(\lambda_k(P)-\alpha)^2\big)^2}+\mathcal{O}(\epsilon^2),
\end{align*}}
noting that $z_k(i,j)$ is $\mathcal{O}(\epsilon)$ from the definition \eqref{definitions}. Minimizing $\tilde{\text{MSD}}(P_{\epsilon},\alpha)-\tilde{\text{MSD}}(P,\alpha)$ is, hence, equivalent to optimization \eqref{optimization} when $\epsilon \ll 1$. Taking into account that $P$ is positive semi-definite, $z_k(i,j)\leq0$ for $k\geq 2$, and $v_1=\mathbf{1}_N/\sqrt{N}$ which implies $z_1(i,j)=0$, we proceed to the lower bound proof using the definition of $h_k(i,j)$ and $\zeta_{\max}$ in the statement of the proposition, as follows
\begin{align*}
\sum_{k=1}^{N} h_k(i,j)&= \sum_{k=2}^{N}\frac{z_k(i,j) \big( 2(1-\alpha^2a^2)\lambda_k(P)+2a^2\alpha \lambda^2_k(P) \big)}{\big(1-a^2(\lambda_k(P)-\alpha)^2\big)^2} \\
&\geq \frac{1}{\big(1-a^2\zeta_{\max}^2\big)^2}\sum_{k=2}^{N} z_k(i,j) \big( 2(1-\alpha^2a^2)\lambda_k(P)+2a^2\alpha \lambda^2_k(P) \big).
\end{align*}
Substituting $z_k(i,j)$ from \eqref{definitions} to above, we have
\begin{align*}
\sum_{k=1}^{N} h_k(i,j)&\geq  \frac{2\epsilon}{\big(1-a^2\zeta_{\max}^2\big)^2}\bigg(\sum_{k=1}^{N} \big(v^\tr_k\Delta P(i,j)v_k\big)\big( (1-\alpha^2a^2)\lambda_k(P)+a^2\alpha \lambda^2_k(P) \big)\bigg)\\
&=\frac{2\epsilon}{\big(1-a^2\zeta_{\max}^2\big)^2} \text{Tr}\bigg( \Delta P(i,j)\sum_{k=1}^{N}\big((1-\alpha^2a^2)\lambda_k(P)+a^2\alpha \lambda^2_k(P)\big)v_kv^\tr_k\bigg) \\
&=\frac{2\epsilon}{\big(1-a^2\zeta_{\max}^2\big)^2} \text{Tr}\bigg(\Delta P(i,j) \big((1-\alpha^2a^2)P+a^2\alpha P^2 \big) \bigg).
\end{align*}
Using the facts that $\text{Tr}(\Delta P(i,j)P)=-p_{ii}-p_{jj}+2p_{ij}$ and $\text{Tr}(\Delta P(i,j)P^2)=-[P^2]_{ii}-[P^2]_{jj}+2[P^2]_{ij}$ according to definition of $\Delta P(i,j)$ in \eqref{edge matrix}, and $p_{ij}=0$ since we are adding a non-existent edge $\{i,j\}$, the lower bound \eqref{lower bound} is derived.
\end{proof}
Beside posing the optimal edge problem as an optimization, Proposition \ref{programming} also provides an upper bound for the best improvement that making a friendship brings to the network. In view of \eqref{lower bound}, forming a connection between two agents with more self-reliance and less common neighbors, minimizes the lower bound, which offers the most maneuver for MSD reduction.

\section{Conclusion}
We studied a distributed online learning problem over a social network. The goal of agents is to estimate the underlying state of the world which follows a geometric random walk. Each individual receives a noisy signal about the underlying state at each time period, so she communicates with her neighbors to recover the true state. We viewed the problem with an optimization lens where agents want to minimize a global loss function in a collaborative manner. To estimate the true state, we proposed two methodologies derived from a different decomposition of the global objective. Given the structure of the network, we provided a tight upper bound on the rate of change of the parameter which allows agents to follow the state with a bounded variance. Moreover, we computed the averaged, steady state, mean-square deviation of the estimates from the true state. The key observation was optimality of one of the estimators indicating the dependence of learning quality on the decomposition. Furthermore, defining the regret as the average of errors in the process of learning during a finite time $T$, we demonstrated that the regret function of the proposed algorithms decays with a rate $\mathcal{O}(1/\sqrt{T})$. Finally, under mild technical assumptions, we characterized the influence of network pattern on learning by observing that each connection brings a monotonic decrease in the MSD.

\section*{Acknowledgments}
We gratefully acknowledge the support of AFOSR MURI CHASE, ONR BRC
Program on Decentralized, Online Optimization, NSF under grants CAREER
DMS-0954737 and CCF-1116928, as well as Dean's Research Fund.


\newpage

\bibliographystyle{IEEEtran}
\bibliography{IEEEabrv,shahin}

\newpage

\section*{Supplementary Material}

\textbf{\emph{Proof of Lemma \ref{errorprocess}}}. We subtract \eqref{state} from \eqref{estimator} to get
\begin{align*}
\hat{x}_{i,t+1}-x_{t+1}&=a\bigg(\sum_{j\in \mathcal{N}_i}p_{ij}\hat{x}_{j,t}-x_{t}+\alpha (y_{i,t}-\hat{x}_{i,t})\bigg)-r_t\\
&=a\bigg(\sum_{j\in \mathcal{N}_i}p_{ij}(\hat{x}_{j,t}-x_{t})+\alpha (y_{i,t}-\hat{x}_{i,t})\bigg)-r_t,
\end{align*}
where we used Assumption \ref{A1} in the latter step. Replacing $y_{i,t}$ from \eqref{observation} in above, and simplifying using definition of $\er_{i,t}$, yields
\begin{align*}
\er_{i,t+1}&=a\bigg(\sum_{j\in \mathcal{N}_i}p_{ij}\er_{j,t}+\alpha (y_{i,t}-\hat{x}_{i,t})\bigg)-r_t\\
&=a\bigg(\sum_{j\in \mathcal{N}_i}p_{ij}\er_{j,t}-\alpha \er_{i,t}\bigg)+(a\alpha) w_{i,t}-r_t.
\end{align*}
Using definition \eqref{error} to write the above in the matrix form completes the proof for $\er_{t}$. The proof for $\Er_{t}$ follows precisely in the same fashion. \qed

\textbf{\emph{Proof of Proposition \ref{Unbiased Estimates}}}. We start by the fact that the innovation and observation noise are zero mean, so \eqref{errordynamics} implies $\Ex[\er_{t+1}]=Q\Ex[\er_{t}]$, and $\Ex[\Er_{t+1}]=Q\Ex[\Er_{t}]$. Therefore, for mean stability of the linear equations, the spectral radius of $Q$ must be less than unity. Considering the expression for $Q$ from \eqref{errormatrix}, for a fixed $\alpha$ we must have
\begin{align}
|a|<\frac{1}{\rho(P-\alpha I_N)}=\frac{1}{\max\{1-\alpha,|\alpha-\lambda_N(P)|\}}. \label{fixed}
\end{align}
To maximize the right hand side over $\alpha$, we need to solve the min-max problem
\begin{align*}
\min_\alpha \bigg\{  \max\{1-\alpha,|\alpha-\lambda_N(P)|\} \bigg\}.
\end{align*}
Noting that $1-\alpha$ and $\alpha-\lambda_N(P)$ are straight lines with negative and positive slopes, respectively, the minimum occurs at the intersection of the two lines. Evaluating the right hand side of \eqref{fixed} at the intersection point $\alpha^{\ast} =\frac{1+\lambda_N(P)}{2}$, completes the proof.\qed

\textbf{\emph{Proof of Theorem \ref{NMSD}}}. We present the proof for $\tilde{\text{MSD}}(P,\alpha)$ by observing that \eqref{errordynamics}
\begin{align*}
\Ex[\Er_{t+1}\Er_{t+1}^\tr]=Q\Ex[\Er_{t}\Er_{t}^\tr]Q^\tr+\Ex[\tilde{s}_t\tilde{s}_t^\tr],
\end{align*}
since the innovation and observation noise are zero mean and uncorrelated. 
Therefore, letting $\tilde{S}=\Ex[\tilde{s}_{t}\tilde{s}_{t}^\tr]$, since $\rho(Q)<1$ by hypothesis, the steady state satisfies a Lyapunov equation as below
\begin{align*}
\tilde{\Sigma}=Q\tilde{\Sigma} Q^\tr + \tilde{S}. 
\end{align*}
Let $Q=Q^\tr=U\Lambda U^\tr$ represent the Eigen decomposition of $Q$. Let also $u_i$ denote the $i$-th eigenvector of $Q$ corresponding to eigenvalue $\lambda_i$. Under stability of $Q$ the solution of the Lyapunov equation is as follows
\begin{align*}
\tilde{\Sigma}&=\sum_{\tau=0}^\infty Q^\tau \tilde{S} Q^\tau\\
&=\sum_{\tau=0}^\infty \sum_{i=1}^N\sum_{j=1}^N\lambda_i^\tau u_iu_i^\tr\tilde{S}\lambda_j^\tau u_ju_j^\tr\\
&=\sum_{i=1}^N\sum_{j=1}^N u_iu_i^\tr\tilde{S}u_ju_j^\tr\sum_{\tau=0}^\infty\lambda_i^\tau \lambda_j^\tau\\
&=\sum_{i=1}^N\sum_{j=1}^N \frac{u_iu_i^\tr\tilde{S}u_ju_j^\tr}{1-\lambda_i \lambda_j}. 
\end{align*}
Therefore, the $\tilde{\text{MSD}}$ defined in \ref{definition of MSD}, can be computed as 
\begin{align*}
\tilde{\text{MSD}}=\frac{1}{N}\text{Tr}(\sum_{i=1}^N\sum_{j=1}^N \frac{u_iu_i^\tr\tilde{S}u_ju_j^\tr}{1-\lambda_i \lambda_j})=\frac{1}{N}\sum_{i=1}^N\sum_{j=1}^N \frac{(u_j^\tr u_i)(u_i^\tr\tilde{S}u_j)}{1-\lambda_i \lambda_j}=\frac{1}{N}\sum_{i=1}^N\frac{u_i^\tr\tilde{S}u_i}{1-\lambda^2_i},
\end{align*}
where we used the fact that $u_j^\tr u_i=0$, for $i \neq j$, and $u_i^\tr u_i=1$, for any $i \in \mathcal{V}$. Taking into account that $Q=a(P-\alpha I_N)$ and $\tilde{S}=\sigma^2_r(\mathbf{1}_N\mathbf{1}_N^\tr)+(a^2\alpha^2\sigma^2_w)P^2$, we derive
\begin{align*}
\tilde{\text{MSD}}&=\frac{1}{N}\sum_{i=1}^N\frac{u_i^\tr(\sigma^2_r(\mathbf{1}_N\mathbf{1}_N^\tr)+(a^2\alpha^2\sigma^2_w)P^2)u_i}{1-\lambda^2_i}\\
&=\frac{1}{N}\sum_{i=1}^N\frac{(u_i^\tr\mathbf{1}_N)^2\sigma^2_r}{1-\lambda^2_i}+\frac{1}{N}\sum_{i=1}^N\frac{a^2\alpha^2\sigma^2_w\lambda^2_i(P)}{1-\lambda^2_i}\\
&=\frac{\sigma^2_r}{1-a^2(1-\alpha)^2}+\frac{1}{N}\sum_{i=1}^N\frac{a^2\alpha^2\sigma^2_w\lambda^2_i(P)}{1-a^2(\lambda_i(P)-\alpha)^2},
\end{align*}
where the last step is due to the facts that $\lambda_i=a(\lambda_i(P)-\alpha)$ and $\mathbf{1}_N/\sqrt{N}$ is one of the eigenvectors of $Q$ with corresponding eigenvalue $a(1-\alpha)$, so it is orthogonal to other eigenvectors, i.e., $u_i^\tr\mathbf{1}_N=0$, for $u_i\neq \mathbf{1}_N/\sqrt{N}$. The proof for $\hat{\text{MSD}}$ follows in the same fashion. \qed

\textbf{\emph{Proof of Theorem \ref{Regret Thm}}}. The closed form solution of the error process \eqref{errordynamics} is,
\begin{align*}
\xi_{t+1}=Q^{t+1}\xi_0+\sum_{\tau=0}^tQ^{t-\tau}s_{\tau},
\end{align*}
which implies
\begin{align*}
\xi_{t+1}\xi_{t+1}^\tr &=Q^{t+1}\xi_0\xi_0^\tr Q^{t+1}+Q^{t+1}\xi_0\bigg(\sum_{\tau=0}^tQ^{t-\tau}s_{\tau}\bigg)^\tr+\bigg(\sum_{\tau=0}^tQ^{t-\tau}s_{\tau} \bigg)\xi_0^\tr Q^{t+1}\\
&+\bigg(\sum_{\tau=0}^tQ^{t-\tau}s_{\tau} \bigg)\bigg(\sum_{\tau=0}^tQ^{t-\tau}s_{\tau} \bigg)^\tr\label{maineq}  \numberthis,
\end{align*}
since $Q$ is symmetric. One can see that 
\begin{align*}
\|\frac{1}{T}\sum_{t=1}^TQ^{t}\xi_0\xi_0^\tr Q^{t} \|\leq \frac{1}{T}\bigg(\frac{\|\xi_0\|^2}{1-\rho^2(Q)}\bigg),
\end{align*}
and 
\begin{align*}
\|\frac{1}{T}\sum_{t=0}^{T-1}Q^{t+1}\xi_0\bigg(\sum_{\tau=0}^{t}Q^{t-\tau}s_{\tau}\bigg)^\tr\|
\leq \frac{\| \xi_0 \| s}{T}\sum_{t=0}^{T-1} \rho(Q)^{t+1} \sum_{\tau=0}^{t} \rho(Q)^{t-\tau}
\leq \frac{1}{T}\bigg(\frac{ s \| \xi_0 \| }{\big(1-\rho(Q)\big)^2}\bigg).
\end{align*}
On the other hand, as we see in the proof of Theorem \ref{NMSD}, letting $S=\Ex[s_\tau s_\tau^\tr]$, we have $\Sigma=\sum_{\tau=0}^\infty Q^\tau S Q^\tau$. Based on definition \eqref{regret}, equation \eqref{maineq}, and the bounds above, we derive
\begin{align*}
R(T)& \leq \frac{1}{T}\bigg(\frac{\|\xi_0\|^2}{1-\rho^2(Q)}\bigg)+\frac{1}{T}\bigg(\frac{2 s \| \xi_0 \| }{\big(1-\rho(Q)\big)^2}\bigg)\\
&+\frac{1}{T}\bigg\|\sum_{t=0}^{T-1}\bigg(\sum_{\tau=0}^tQ^{t-\tau}s_{\tau} \bigg)\bigg(\sum_{\tau=0}^tQ^{t-\tau}s_{\tau} \bigg)^\tr-\sum_{\tau=0}^t Q^{\tau}SQ^\tau\bigg\|+\|\frac{1}{T}\sum_{t=0}^{T-1} \sum_{\tau=t+1}^{\infty}Q^\tau S Q^\tau \|. \numberthis \label{regret2}
\end{align*}
Let
\begin{align*}
H(s_0,...,s_{T-1})=\sum_{t=0}^{T-1}\bigg(\sum_{\tau=0}^tQ^{t-\tau}s_{\tau} \bigg)\bigg(\sum_{\tau=0}^tQ^{t-\tau}s_{\tau} \bigg)^\tr,
\end{align*}
and observe that $\Ex[H(s_0,...,s_{T-1})]=\sum_{t=0}^{T-1}\sum_{\tau=0}^t Q^{\tau}SQ^\tau$. It can be verified that for any $0 \leq t < T$,
\begin{align*}
\| H(s_0,...,s_t...,s_{T-1})-H(s_0,...,s'_t,...,s_{T-1}) \| \leq \frac{4s^2}{\big(1-\rho(Q)\big)^2}.
\end{align*}
Thus, letting $Var=\frac{16Ts^4}{\big(1-\rho(Q)\big)^4}$, and appealing to Lemma \ref{technical}, we get
\begin{align*}
\mathbb{P}\bigg\{\bigg\|\sum_{t=0}^{T-1}\bigg(\sum_{\tau=0}^tQ^{t-\tau}s_{\tau} \bigg)\bigg(\sum_{\tau=0}^tQ^{t-\tau}s_{\tau} \bigg)^\tr-\sum_{\tau=0}^t Q^{\tau}SQ^\tau\bigg\| \geq c \bigg\} \leq Ne^{-c^2/8\text{Var}}.
\end{align*}
Setting the probability above equal to $\delta$, this implies that with probability at least $1-\delta$, we have
\begin{align*}
\frac{1}{T}\bigg\|\sum_{t=0}^{T-1}\bigg(\sum_{\tau=0}^tQ^{t-\tau}s_{\tau} \bigg)\bigg(\sum_{\tau=0}^tQ^{t-\tau}s_{\tau} \bigg)^\tr-\sum_{\tau=0}^t Q^{\tau}SQ^\tau\bigg\| \leq \frac{1}{\sqrt{T}}\frac{8s^2\sqrt{2\log{\frac{N}{\delta}}}}{(1-\rho(Q))^2}.
\end{align*}
Moreover, we evidently have 
\begin{align*}
\|\frac{1}{T}\sum_{t=0}^{T-1} \sum_{\tau=t+1}^{\infty}Q^\tau S Q^\tau \| \leq \frac{1}{T}\bigg(\frac{s^2}{\big(1-\rho^2(Q)\big)^2}\bigg).
\end{align*}
Plugging the two bounds above in \eqref{regret2} completes the proof.\qed

\textbf{\emph{Proof of Proposition \ref{comparing MSD}}}. Considering the expression for MSD in Theorem \ref{NMSD}, we have
\begin{align*}
\tilde{\text{MSD}}(P,\alpha)&-\tilde{\text{MSD}}(P_{-\epsilon},\alpha)=\tilde{W}_{MSD}(P,\alpha)-\tilde{W}_{MSD}(P_{-\epsilon},\alpha)\\
&\propto \sum_{i=1}^N \frac{\big(\lambda_i(P)-\lambda_i(P_{-\epsilon})\big)\big((1-\alpha^2a^2)(\lambda_i(P_{-\epsilon})+\lambda_i(P))+2a^2\alpha \lambda_i(P)\lambda_i(P_{-\epsilon}) \big)}{\big(1-a^2(\lambda_i(P)-\alpha)^2\big)\big(1-a^2(\lambda_i(P_{-\epsilon})-\alpha)^2\big)}. 
\end{align*}
Based on definitions \eqref{edge matrix} and \eqref{new matrix}, it follows from Weyl's eigenvalue inequality that $\lambda_k(P)-\lambda_k(P_{-\epsilon})\leq \lambda_1\big(\epsilon \Delta P(i,j)\big)=0$, for any $k\in \mathcal{V}$. Combined with the assumptions $P\geq 0$ and $|a\alpha|<1$, this implies that the numerator of the expression above is always non-positive. The denominator is always positive due to stability of the error process $\Er_t$ in \eqref{errordynamics}, and hence, $\tilde{\text{MSD}}(P,\alpha)\leq \tilde{\text{MSD}}(P_{-\epsilon},\alpha)$.\qed

\end{document}